\def\doctype{}
\newcommand\lam{\lambda}
\newcommand{\cA}{\mathcal{A}}
\newcommand{\cB}{\mathcal{B}}
\newcommand\Q{\mathbb{Q}}
\newcommand\Z{\mathbb{Z}}
\newcommand\R{\mathbb{R}}
\newcommand\F{\mathbb{F}}
\newcommand{\comment}[1]{}
\numberwithin{equation}{section}
\let\oldsection\section
\newcommand\boldsection[1]{\oldsection{\bf #1}}
\newcommand\starsection[1]{\oldsection*{\bf #1}}
\renewcommand\section{\@ifstar\starsection\boldsection}
\newtheoremstyle{theorem}
  {12pt}		  
  {0pt}  
  {\sl}  
  {\parindent}     
  {\bf}  
  {. }    
  { }    
  {}     
\theoremstyle{theorem}
\newtheorem{thm}{Theorem}[section]  
\newtheorem{lemma}[thm]{Lemma}     
\newtheorem{cor}[thm]{Corollary}
\newtheorem{prop}[thm]{Proposition}
\newtheoremstyle{definition}
  {12pt}		  
  {0pt}  
  {}  
  {\parindent}     
  {\bf}  
  {. }    
  { }    
  {}     
\theoremstyle{definition}
\newcommand\rk{{\sc Remark.} }
\renewcommand{\proofname}{Proof}
\renewenvironment{proof}[1][\proofname]{\par
  \pushQED{\qed}%
  \normalfont \partopsep=\z@skip \topsep=\z@skip
  \trivlist
  \item[\hskip\labelsep
        \scshape
    #1\@addpunct{.}]\ignorespaces
}{%
  \popQED\endtrivlist\@endpefalse
}
\renewcommand*\@maketitle{%
  \normalfont\normalsize
  \@adminfootnotes
  \@mkboth{\@nx\shortauthors}{\@nx\shorttitle}%
  \global\topskip42\p@\relax 
  \@settitle
  \ifx\@empty\authors \else {\vskip 1em
\vtop{\centering\shortauthors\@@par}} \fi
  \ifx\@empty\@date \else {\vskip 1em \vtop{\centering\@date\@@par}}\fi 
  \ifx\@empty\@dedicatory
  \else
    \baselineskip18\p@
    \vtop{\centering{\footnotesize\itshape\@dedicatory\@@par}%
      \global\dimen@i\prevdepth}\prevdepth\dimen@i
  \fi
  \@setabstract
  \normalsize
  \if@titlepage
    \newpage
  \else
    \dimen@34\p@ \advance\dimen@-\baselineskip
    \vskip\dimen@\relax
  \fi
} 
\renewcommand*\@adminfootnotes{%
  \let\@makefnmark\relax  \let\@thefnmark\relax
  \ifx\@empty\@subjclass\else \@footnotetext{\@setsubjclass}\fi
  \ifx\@empty\@keywords\else \@footnotetext{\@setkeywords}\fi
  \ifx\@empty\thankses\else \@footnotetext{%
    \def\par{\let\par\@par}\@setthanks}%
  \fi
\thispagestyle{titlepage}
}
\begin{document}

\title{\large Threefold triple systems with nonsingular $N_2$}

\author{Peter J.~Dukes}
\address{\rm Peter J.~ Dukes:
Mathematics and Statistics,
University of Victoria, Victoria, Canada
}
\email{dukes@uvic.ca}

\author{Kseniya Garaschuk}
\address{\rm Kseniya Garaschuk:
Mathematics and Statistics,
University of Victoria, Victoria, Canada
}
\email{kgarasch@uvic.ca}

\thanks{Research of Peter Dukes is supported by NSERC grant number 312595--2010}

\date{\today}

\begin{abstract}
There are various results connecting ranks of incidence matrices of graphs and hypergraphs with their combinatorial structure.  Here, we consider the generalized incidence matrix $N_2$ (defined by inclusion of pairs in edges) for one natural class of hypergraphs: the triple systems with index three.  Such systems with nonsingular $N_2$ (over the rationals) appear to be quite rare, yet they can be constructed with PBD closure.  In fact, a range of ranks near $\binom{v}{2}$ is obtained for large orders $v$. 
\end{abstract}

\maketitle
\hrule

\section{Introduction}

We consider hypergraphs with the possibility of repeated edges.  Let $v$ and $\lam$ be positive integers, and suppose $K \subset \Z_{\ge 2}:= \{2,3,4,\dots\}$.
A \emph{pairwise balanced design} PBD$_\lam(v,K)$ is a hypergraph $(V,\cB)$ with $v$ vertices, edge sizes belonging to $K$, and such that
\begin{itemize}
\item
any two distinct vertices in $V$ appear together in exactly $\lam$ edges.
\end{itemize}
In this context, vertices are also called \emph{points} and edges are normally called \emph{blocks}.  The parameter $\lam$ is the \emph{index}; often it is taken to be 1 and suppressed from the notation.  We remark that $K$ could contain unused block sizes.

There are numerical constraints on $v$ given $\lam$ and $K$.  
An easy double-counting argument on pairs of points leads to the \emph{global condition}
\begin{equation}
\label{global}
\lam v(v-1) \equiv 0 \pmod{\beta(K)},
\end{equation}
where $\beta(K):=\gcd\{k(k-1): k \in K\}$.  Similarly, counting incidences with any specific point leads to the \emph{local condition}
\begin{equation}
\label{local}
\lam(v-1) \equiv 0 \pmod{\alpha(K)},
\end{equation}
where $\alpha(K):=\gcd\{k-1: k \in K\}$.  Wilson's theory, \cite{RMW2}, asserts that (\ref{global}) and (\ref{local}) are sufficient for large $v$. 

In the case $K=\{3\}$, we obtain a ($\lam$-fold) \emph{triple system} or TS$_\lam(v)$.  When $\lam=1$ we have a \emph{Steiner triple system} and it is well known that these exist for all $v \equiv 1,3 \pmod{6}$.  In this article we are especially interested in the case $\lam=3$.  The divisibility conditions (\ref{global}) and (\ref{local}) simply reduce to $v$ being odd.  There are $3v(v-1)/6 = \binom{v}{2}$ blocks.  For a comprehensive reference on triple systems, the reader is referred to Colbourn and Rosa's book \cite{CR}.

Given any hypergraph $H=(V,E)$, we may define its \emph{incidence matrix} $N=N(H)$ as the zero-one inclusion matrix of points versus edges. That is, $N$ has rows indexed by $V$, columns indexed by $E$, and where, for $x \in V$, $e \in E$,
$$N(x,e) = \begin{cases}
1 & \text{if } x \in e; \\
0 & \text{otherwise}.
\end{cases}$$

Linear algebraic properties of incidence matrices have received a lot of attention.  Especially interesting are connections with the underlying combinatorial structure.  We give two classical examples.  First, in the case of ordinary graphs, in which $E \subseteq \binom{V}{2}$, it is known \cite{VN} that $N$ has full rank (over $\R$) if and only if every connected component is non-bipartite.  As a different example, the rank of a Steiner triple system over the binary field $\F_2$ is connected in \cite{DHV} with its `projective dimension'.  This measures the length of the lattice of largest possible proper subsystems.  

Let $s$ be a positive integer. The \emph{higher incidence matrix} $N_s$ has a similar definition, but where rows are indexed by $\binom{V}{s}$ (the $s$-subsets of vertices), columns are again indexed by blocks, and entries are defined by inclusion.  That is, for $S \subseteq V$, $|S|=s$, and $e \in E$, we have  
$$N_s(S,e) = \begin{cases}
1 & \text{if } S \subseteq e; \\
0 & \text{otherwise}.
\end{cases}$$
Higher incidence matrices were used by Ray-Chaudhuri and Wilson in \cite{Ontdes} to extend Fisher's inequality to designs of `higher strength'.  In a little more detail, suppose we have a system $(V,\cB)$ of $v$ points, blocks of a fixed size $k$, and every $t$-subset of points belongs to exactly $\lam$ blocks.  These are sometimes denoted S$_\lam(t,k,v)$.  Suppose further that $t$ is even, say $t=2s$, and $v \ge k+s$.  Then the conclusion is that $|\cB| \ge \binom{v}{s}$, and it comes with a strong structural condition for equality.  The matrix $N_s$ plays a key role in the proof.  Incidentally, a new result of Keevash in \cite{Keevash} proves that, for large $v$, the divisibility conditions $\binom{k-i}{t-i} \mid \binom{v-i}{t-i}$ for $i=0,\dots,t$ (which are the analogs of (\ref{global}-\ref{local})) suffice for the existence of S$_\lam(t,k,v)$.

Returning to pairwise balanced designs, higher incidence matrices are of limited use when $\lam=1$. In this case, the matrix $N_2$ is only slightly interesting; each of its rows has exactly one nonzero entry.  The matrix $N_k$ is just, under a reordering of rows, the identity matrix on top of the zero matrix.  In between, $N_s$ for $2<s<k$ has many zero rows and not much structure.

We would like to consider $N_2$ for what is perhaps the first natural case: threefold triple systems TS$_3(v)$.  For such designs, $N_2$ is square of order $\binom{v}{2}$.  In general, we observe that the property of a design having full rank $N_2$ is `PBD-closed'.  From this and some small designs, we have the following main result.

\begin{thm}
\label{main}
There exists a TS$_3(v)$ with $N_2$ nonsingular over $\R$ for all odd $v \ge 5$ except possibly for $v \in E_{579}:= \{v: v \equiv 1 \pmod{2}, v \ge 5,~\text{and} \not\exists~\text{PBD}(v,\{5,7,9\})\}$. 
\end{thm}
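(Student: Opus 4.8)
The plan is to combine two ingredients: a PBD-closure principle for the property ``$N_2$ is nonsingular over $\R$'', and a supply of small TS$_3$ ingredient designs that verify the property for a few starting orders. The closure principle should say that if $(V,\cB)$ is a PBD$(v,K)$ and for each block size $k \in K$ there is a TS$_3(k)$ with nonsingular $N_2$, then one can build a TS$_3(v)$ with nonsingular $N_2$ by replacing each block of $(V,\cB)$ with three copies of a triple system on its points --- more precisely, by placing on each block $B$ of size $k$ a TS$_3(k)$ whose own $N_2$ is nonsingular (here using $\lam=3$ so that the pair counts add up correctly: each pair of $V$ is in exactly one block $B$, and the chosen TS$_3(k)$ on $B$ covers that pair three times). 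The matrix $N_2$ of the resulting TS$_3(v)$ then decomposes, under a suitable ordering of its $\binom{v}{2}$ rows (grouped by which block of $\cB$ the pair lies in) and its $\binom{v}{2}$ columns (grouped by which block of $\cB$ generated the triple), as a block-diagonal matrix whose diagonal blocks are exactly the $N_2$ matrices of the ingredient TS$_3(k)$'s; hence its determinant is the product of the ingredient determinants, and nonsingularity is inherited. I would state and prove this closure fact first (it is essentially Construction-plus-one-linear-algebra-observation).

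Next I would assemble the base cases. Since a PBD$(v,\{5,7,9\})$ exists for all odd $v$ outside the (finite, by Wilson's theorem) exceptional set $E_{579}$, it suffices to exhibit a TS$_3(5)$, a TS$_3(7)$, and a TS$_3(9)$ each with nonsingular $N_2$ over $\R$. These are small explicit designs: $\binom{5}{2}=10$, $\binom{7}{2}=21$, $\binom{9}{2}=36$ blocks respectively, and one checks the determinant of a $10\times10$, $21\times21$, $36\times36$ zero-one matrix is nonzero (a routine, if slightly tedious, finite computation; one can do it by hand for $v=5$ and by machine or a clever rank argument for $v=7,9$). With these in hand, the closure principle immediately yields a TS$_3(v)$ with nonsingular $N_2$ for every odd $v \ge 5$ admitting such a PBD, i.e.\ for all odd $v \ge 5$ except possibly those in $E_{579}$, which is exactly the claim.

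The main obstacle is the linear-algebra core of the closure step: one must be careful that the block-diagonal structure of $N_2$ is genuine, i.e.\ that there is no ``leakage'' between blocks of $\cB$. A pair $\{x,y\} \subseteq V$ determines a unique block $B$ of the PBD containing it, so the row of $N_2$ indexed by $\{x,y\}$ has support only among the triples generated inside $B$; and a generated triple lies inside a single $B$, so the column has support only among pairs inside that same $B$. This is what forces the global matrix to be the direct sum $\bigoplus_{B \in \cB} N_2(\mathrm{TS}_3(|B|)_B)$ after permuting rows and columns, and it is precisely here that the index being $3$ (so that the three copies of a TS$_1(k)$, or one TS$_3(k)$, exactly account for each pair three times) is used. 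A secondary but real obstacle is simply certifying the three base determinants are nonzero; I would either present explicit designs with a short verification, or invoke a small computer check, and would double-check that a valid TS$_3(5)$ exists at all (it does: take any three, possibly coincident, decompositions, or a suitable multiset of triples on $5$ points covering each pair thrice). If a cleaner structural reason for the nonsingularity of the small cases can be found (for instance via an eigenvalue computation on a symmetric design), that would be preferable to a brute-force determinant evaluation, but it is not needed for the proof.
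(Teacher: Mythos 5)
Your proposal is correct and follows essentially the same route as the paper: a PBD-closure lemma for the property of having square nonsingular $N_2$ (via the same block-diagonal decomposition of $N_2$ over the blocks of the PBD$(v,\{5,7,9\})$), combined with explicit TS$_3(u)$ ingredients for $u=5,7,9$ verified by computation, with the $v=5$ case handled in the paper exactly by the eigenvalue argument you suggest as an alternative ($N_2N_2^\top = 3I+A$ with $A$ the adjacency matrix of the line graph of $K_5$). No substantive differences.
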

It is known (see \cite{Quint} and the summary table entry at \cite{Handbook}, page 252) that
$$E_{579} \subseteq \{11..19, 23, 27..33, 39, 43, 51, 59, 71, 75, 83, 87, 95, 99, 107, 111, 113, 115, 119, 139, 179\},$$
and therefore Theorem~\ref{main} settles the existence question for all but a finite set of values $v$.

The next section sets up and completes the proof.  Then, we conclude with a short discussion of some related topics, including a brief look at such ranks in characteristic $p$.

\section{PBD Closure and Proof of the Main Result}

To prove Theorem~\ref{main}, we first observe that having square nonsingular $N_2$ is a `PBD-closed' property.

\begin{lemma}
\label{lem1}
Suppose there exists a PBD$(v,L)$ and, for each $u \in L$, there exists a PBD$_\lam(u,K)$ having $N_2$ square and full rank over $\F$.  Then there exists a PBD$_\lam(v,K)$ having $N_2$ square and full rank over $\F$.
\end{lemma}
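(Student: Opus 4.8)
The plan is the standard ``break up the blocks'' PBD-closure construction, together with the observation that the resulting $N_2$ is block diagonal. Let $(V,\cB)$ be a PBD$(v,L)$. For each block $B\in\cB$ fix a bijection from $B$ onto the point set of some PBD$_\lam(|B|,K)$ with square, full-rank $N_2$ (such a design exists because $|B|\in L$), and let $\cA_B$ be the resulting multiset of blocks, now living on the point set $B$. Put $\cA:=\bigcup_{B\in\cB}\cA_B$ (union as multisets). First I would verify that $(V,\cA)$ is a PBD$_\lam(v,K)$: every pair $\{x,y\}\in\binom{V}{2}$ lies in a unique block $B\in\cB$ since $\cB$ has index $1$; inside that $B$ the pair lies in exactly $\lam$ blocks of $\cA_B$; and for $B'\ne B$ it lies in no block of $\cA_{B'}$, because $|B\cap B'|\le 1$ (again by index $1$ of $\cB$), so $\{x,y\}\not\subseteq B'$ and hence $\{x,y\}$ is contained in no $e\subseteq B'$. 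Thus each pair of $V$ is in exactly $\lam$ blocks of $\cA$, and all block sizes lie in $K$.

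Next I would check squareness. Each small design used has square $N_2$, hence exactly $\binom{|B|}{2}$ blocks, so $|\cA|=\sum_{B\in\cB}\binom{|B|}{2}=\binom{v}{2}$, the last equality because the blocks of $\cB$ partition $\binom{V}{2}$. Therefore $N_2:=N_2(V,\cA)$ is square of order $\binom{v}{2}$.

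The main step is the rank argument. Index the rows of $N_2$ by writing $\binom{V}{2}=\bigsqcup_{B\in\cB}\binom{B}{2}$ according to the unique block of $\cB$ containing each pair, and index the columns by $\cA=\bigsqcup_{B\in\cB}\cA_B$. In this ordering, the $(B,B')$ sub-block records, for $\{x,y\}\subseteq B$ and $e\in\cA_{B'}$, whether $\{x,y\}\subseteq e$; since $e\subseteq B'$ this forces $\{x,y\}\subseteq B\cap B'$, impossible for $B'\ne B$. So $N_2$ is block diagonal, and its diagonal sub-block indexed by $B$ is, by construction, exactly the $N_2$ matrix of the PBD$_\lam(|B|,K)$ placed on $B$ — square and nonsingular over $\F$ by hypothesis. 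Hence $\det N_2$ equals the product of these nonzero diagonal determinants, so $N_2$ is square with full rank over $\F$.

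\textbf{Main obstacle.} There is no serious obstacle here; the only thing to get right is the block-diagonal structure, and the one subtlety is that index $1$ of the ambient PBD is used twice: once so that the blocks of $\cB$ partition the pairs of $V$ (giving a genuine PBD$_\lam$ and a square $N_2$), and once so that two distinct blocks meet in at most one point (killing the off-diagonal sub-blocks). Everything else is bookkeeping.
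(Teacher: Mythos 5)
Your proposal is correct and follows essentially the same route as the paper: replace each block of the PBD$(v,L)$ by a small PBD$_\lam(u,K)$ with square nonsingular $N_2$, order rows by the partition $\binom{V}{2}=\bigsqcup_{B}\binom{B}{2}$ and columns by the corresponding block collections, and observe that $N_2$ becomes block diagonal with nonsingular diagonal blocks. You fill in some details the paper leaves implicit (the pair-count verification and the explicit use of index $1$ to kill off-diagonal sub-blocks), but the argument is the same.
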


\begin{proof}
Suppose our PBD$(v,L)$ is $(V,\cA)$.  Construct a PBD$_\lam(v,K)$ with points $V$ and block collection 
\begin{equation}
\label{breaking-blocks}
\cB = \bigcup_{U \in \cA} \cB[U],
\end{equation} 
where $\cB[U]$ denotes the blocks of a PBD$_\lam(|U|,K)$ on $U$ having full rank $N_2$.  (Note (\ref{breaking-blocks}) should be interpreted as a formal sum or `multiset union'.)  It is clear that $(V,\cB)$ is a PBD$_\lam(v,K)$.  Consider its incidence matrix $N_2(\cB)$.   If columns are ordered respecting some ordering $U_1,U_2,\dots$ of $\cA$ and the union in (\ref{breaking-blocks}), and rows are ordered respecting $\binom{U_1}{2},\binom{U_2}{2},\dots$, then we obtain a block-diagonal structure
$$N_2(\cB) = N_2(\cB[U_1]) \oplus N_2(\cB[U_1]) \oplus \dots.$$
Since each block is nonsingular, so is $N_2(\cB)$. 
\end{proof}

To clarify, we are working in characteristic zero (rank computed over $\Q$) throughout the remainder of the section.

\begin{lemma}
\label{lem2}
For $v=5,7,9$, there exists a TS$_\lam(v)$ having nonsingular $N_2$.
\end{lemma}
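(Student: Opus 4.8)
The idea is to dispose of $v=5$ by a clean closed-form argument and to handle $v=7$ and $v=9$ by exhibiting explicit designs and checking a (small) determinant.

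For $v=5$ I would take the TS$_3(5)$ consisting of \emph{all} $\binom{5}{3}=10$ triples on a $5$-set; every pair lies in exactly $5-2=3$ of these, so this is indeed a TS$_3(5)$, and its matrix $N_2$ is exactly the $10\times 10$ pair-versus-triple inclusion matrix of a $5$-set. To see $N_2$ is nonsingular, form $N_2N_2^\top$: its $(S,S')$ entry is the number of triples containing $S\cup S'$, which is $3$ if $S=S'$, is $1$ if $|S\cap S'|=1$ (then $S\cup S'$ is itself the unique such triple), and is $0$ if $S\cap S'=\emptyset$ (then $|S\cup S'|=4$). Thus $N_2N_2^\top = 3I+A$, where $A$ is the adjacency matrix of the triangular graph $T(5)$, a strongly regular graph with eigenvalues $6,1,-2$. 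Hence $N_2N_2^\top$ has eigenvalues $9,4,1$, is positive definite, and $N_2$ is nonsingular (in fact $\det N_2=\pm 48$). One could instead quote the classical rank formula for set-inclusion matrices, which gives full rank precisely when $v\ge t+k=5$.

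For $v=7$ and $v=9$ this shortcut is unavailable: the complete $3$-uniform hypergraph would have index $5$, resp.\ $7$, and one cannot repair this with repeated copies of smaller designs, since a repeated block gives two equal columns of $N_2$ and hence a singular matrix. Instead I would exhibit specific designs and verify nonsingularity directly. A convenient form is a cyclic TS$_3(v)$ developed over $\Z_v$ from a short list of base blocks. For $v=7$ a natural candidate is the three $\Z_7$-orbits of $\{0,1,2\}$, $\{0,1,4\}$, $\{0,2,4\}$: the within-block differences are three copies each of $1,2,3$, so every pair is covered thrice, the three orbits are distinct, and we get $21=\binom{7}{2}$ blocks. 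For $v=9$ a candidate is the four $\Z_9$-orbits of $\{0,1,3\}$, $\{0,1,4\}$, $\{0,2,4\}$, $\{0,3,4\}$, with differences balanced over $\{1,2,3,4\}$ and $36=\binom{9}{2}$ blocks. One then writes down the resulting $21\times 21$, resp.\ $36\times 36$, matrix $N_2$ and checks that $\det N_2\ne 0$; this is most safely done by reducing the integer determinant modulo a small prime and observing the residue is nonzero. If these particular cyclic designs happen not to work, one falls back on a short computer search for a TS$_3(7)$ and a TS$_3(9)$ that do.

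\textbf{Main obstacle.} The linear algebra here is trivial by modern standards; the real point, as the abstract warns, is that a TS$_3(v)$ with nonsingular $N_2$ is a rare object — anything with a repeated block is out, and highly symmetric designs tend to carry large null spaces as well — so one cannot simply write down an arbitrary threefold triple system and expect it to work. Thus the content of the lemma is the search for and exhibition of suitable designs on $5$, $7$, and $9$ points; once these are produced, the verification is routine. (The reward is that, together with Lemma~\ref{lem1} and the existence of a PBD$(v,\{5,7,9\})$, these three base designs propagate to every odd $v\ge 5$ outside $E_{579}$.)
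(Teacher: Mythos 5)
Your $v=5$ argument is exactly the paper's: the unique TS$_3(5)$ is the complete design, $N_2N_2^\top=3I+A$ with $A$ the adjacency matrix of the line graph of $K_5$, and the eigenvalues $6^1,1^4,(-2)^5$ give positive definiteness. For $v=7,9$ your overall strategy (exhibit an explicit design, verify $\det N_2\ne 0$ by machine) is also the paper's; the paper simply prints two non-cyclic designs and supplies code for the check. So the shape of the proof is fine.

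There is, however, a concrete failure in the witnesses you exhibit. Your cyclic TS$_3(7)$ does work: decomposing $N_2$ by $\Z_7$-characters into $3\times 3$ blocks $M(\omega)$ gives $\det M(1)=-9$ and $\det M(\omega)=-2\omega^4$ for $\omega\ne 1$, so $\det N_2=-576$ (consistent with the paper's remark that exactly one TS$_3(7)$ is good — yours must be it). But your cyclic TS$_3(9)$ with base blocks $\{0,1,3\},\{0,1,4\},\{0,2,4\},\{0,3,4\}$ has \emph{singular} $N_2$: each of the four base blocks contains equally many pairs of difference $1$ as pairs of difference $3$ (namely $1,1,0,1$ of each, respectively), and this property is translation-invariant, so the $\{\pm1,0\}$-vector
$$\sum_{\mathrm{diff}(P)=1} e_P \;-\; \sum_{\mathrm{diff}(P)=3} e_P$$
lies in the left kernel of $N_2$. (Equivalently, in the character decomposition the block $M(1)$ has two equal rows — forced here because $\{0,1,4\}$ and $\{0,3,4\}$ have the same difference multiset $\{1,3,4\}$.) So the step ``check that $\det N_2\ne 0$'' would fail for your $v=9$ candidate, and the lemma for $v=9$ is carried entirely by your fallback clause ``search for a design that does work.'' That fallback is legitimate and is in effect what the paper does, but as written your proof does not yet produce a valid TS$_3(9)$; you would need to replace the $v=9$ base blocks (or abandon cyclicity, as the paper's example does) and actually record a design that passes the determinant check.
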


\begin{proof}
The unique TS$_3(5)$ is just the complete design $\binom{[5]}{3}$.  Accordingly, for this design, we have $N_2 N_2^\top = 3I+A$, where $A$ is the adjacency matrix of the line graph of $K_5$ (or complement of the Petersen graph).  Since $A$ is known to have eigenvalues $(-2)^5$, $1^4$, $6^1$, it follows that $N_2$ has full rank.  Examples for $v=7,9$ are given below as a list of blocks on $\{0,\dots,v-1\}$.
\begin{align*}
&\{0,1,2\},\{0,1,3\},\{0,1,4\},\{0,2,3\},\{0,2,5\},\{0,3,6\},\{0,4,5\},\\
v=7:  \hspace{1.5cm} &\{0,4,6\},\{0,5,6\},\{1,2,4\},\{1,2,6\},\{1,3,5\},\{1,3,6\},\{1,4,5\},\\
&\{1,5,6\},\{2,3,4\},\{2,3,5\},\{2,4,6\},\{2,5,6\},\{3,4,5\},\{3,4,6\}.
\end{align*}
\begin{align*}
&\{0,1,2\},\{0,1,3\},\{0,1,4\},\{0,2,3\},\{0,2,5\},\{0,3,6\},\{0,4,6\},\{0,4,7\},\{0,5,7\},\\
v=9: \hspace{.5cm} &\{0,5,8\},\{0,6,8\},\{0,7,8\},\{1,2,4\},\{1,2,5\},\{1,3,6\},\{1,3,8\},\{1,4,7\},\{1,5,6\},\\
&\{1,5,8\},\{1,6,7\},\{1,7,8\},\{2,3,4\},\{2,3,7\},\{2,4,8\},\{2,5,6\},\{2,6,7\},\{2,6,8\},\\
&\{2,7,8\},\{3,4,5\},\{3,4,8\},\{3,5,7\},\{3,5,8\},\{3,6,7\},\{4,5,6\},\{4,5,7\},\{4,6,8\}.
\end{align*}
It is straightforward to confirm these have full rank $N_2$; for example, the following sage code can be used given \verb|v| and a list of sets \verb|B| as above. \qedhere
\begin{verbatim}
T = Set(range(v)).subsets(2)
N2 = matrix(QQ,binomial(v,2))
for i in range(binomial(v,2)):
    for j in range(binomial(v,2)):
        if Set(T[i]).issubset(B[j]):
            N2[i,j]+=1
N2.rank()
\end{verbatim}
\end{proof}

\rk
Of the ten non-isomorphic TS$_3(7)$, exactly one has nonsingular $N_2$.  Of the 22521 TS$_3(9)$, exactly 27 have nonsingular $N_2$. 

The proof of our main result is now an easy combination of the preceding lemmas.

\begin{proof}[Proof of Theorem~\ref{main}]
Take a PBD$(v,\{5,7,9\})$ and replace its blocks as in Lemma~\ref{lem1} by TS$_3(u)$ for $u=5,7,9$ having nonsingular $N_2$, the latter existing by Lemma~\ref{lem2}.  The result is a TS$_3(v)$ having nonsingular $N_2$, as desired.
\end{proof}

\section{Discussion}

A (pairwise) \emph{trade} is a 2-edge-colored hypergraph $(T,\cA_1,\cA_2)$ such that each color class $\cA_i$ covers, counting multiplicity, the same pairs in $\binom{T}{2}$.  A nontrivial example is the `quadrilateral' 
$$\{u,v,a\},\{x,y,a\},\{u,x,b\},\{v,y,b\}$$
together with its image under permuting $a,b$.  Suppose a (multi-)hypergraph $H=(V,\cB)$ contains a trade $(T,\cA_1,\cA_2)$ with $T \subseteq V$ and $\cA_1$, $\cA_2$ as different (multiset) subsets of $\cB$.  Then the trade induces a $\{\pm 1, 0\}$-vector in the kernel of $N_2(H)$.  It follows that some design has $N_2$ of full (column) rank only if it is `trade-free', and in particular, has no repeated blocks.  Accordingly, we have the following direct consequence of Theorem~\ref{main}.

\begin{cor}
There exist trade-free TS$_3(v)$ for all odd integers $v \ge 5$, $v \not\in E_{579}$.
\end{cor}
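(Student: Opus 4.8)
The corollary follows almost immediately from the observations already recorded in the discussion paragraph. The plan is to combine Theorem~\ref{main} with the contrapositive of the implication ``contains a trade $\Rightarrow$ $N_2$ has nontrivial kernel'' that is spelled out just above the corollary statement.

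First I would recall the key fact: if a multi-hypergraph $H=(V,\cB)$ contains a trade $(T,\cA_1,\cA_2)$ with $T\subseteq V$ and $\cA_1,\cA_2$ distinct (multiset) subsets of $\cB$, then the $\{+1,-1,0\}$-vector $\chi$ that takes value $+1$ on the blocks of $\cA_1$, $-1$ on the blocks of $\cA_2$, and $0$ elsewhere, satisfies $N_2(H)\chi=0$. Indeed, for any pair $\{x,y\}\in\binom{V}{2}$ the $\{x,y\}$-entry of $N_2(H)\chi$ counts the blocks of $\cA_1$ containing $\{x,y\}$ minus those of $\cA_2$ containing $\{x,y\}$, and by the defining property of a trade these counts agree (pairs outside $\binom{T}{2}$ contribute $0$ on both sides). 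Since $\cA_1\neq\cA_2$ as multisets, $\chi\neq 0$, so $N_2(H)$ is singular. A repeated block is a special case: if $B$ occurs with multiplicity $\ge 2$, then two copies of $B$ form a trivial trade, equivalently two equal columns of $N_2(H)$, again forcing singularity.

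Taking the contrapositive, any TS$_3(v)$ with nonsingular $N_2$ is necessarily trade-free. Now I would simply invoke Theorem~\ref{main}: for every odd $v\ge 5$ with $v\notin E_{579}$, such a TS$_3(v)$ exists, hence a trade-free TS$_3(v)$ exists for all such $v$. This completes the proof.

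There is no real obstacle here; the only thing to be a little careful about is the bookkeeping in the trade-to-kernel-vector argument, namely checking that columns of $N_2$ are indexed by blocks \emph{with multiplicity} so that $\cA_1,\cA_2$ really do pick out genuine coordinate subsets, and that the resulting $\chi$ is nonzero precisely because $\cA_1$ and $\cA_2$ differ as multisets. Since this argument is already given in the surrounding text, the proof of the corollary is essentially a one-line citation of Theorem~\ref{main}.
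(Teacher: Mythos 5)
Your proof is correct and matches the paper's own argument exactly: the paper derives the corollary as a direct consequence of Theorem~\ref{main} via the same observation that a trade yields a nonzero $\{\pm 1,0\}$-vector in the kernel of $N_2$, so nonsingular $N_2$ forces the design to be trade-free. Nothing further is needed.
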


It may be of interest to compute the set of all possible ranks of $N_2$ over TS$_3(v)$ for a fixed $v$.
When $v \equiv 1,3 \pmod{6}$, one such TS$_3(v)$ comes from three copies of a Steiner triple system, which has rank $\frac{1}{3} \binom{v}{3}$.  It is clear that this is the minimum possible rank.  In the case $v=7$, the complete list of ranks (with repetition) is
$$7,10,12,13,13,15,15,16,18,21.$$
For $v=9$ we compute the list of distinct ranks as
$12, 17, 19, \dots, 36$.
Now, a result of Colbourn and R\H{o}dl in \cite{percentages} guarantees the existence of a PBD$(v,\{5,7,9\}$ for large odd $v$ with many blocks of size 9.  It follows with a similar argument as in the proof of Theorem~\ref{main} that all ranks in the interval $[c\binom{v}{2},\binom{v}{2}]$ are realizable for $c \approx \frac{19}{36}$ and large $v$.  

Finally, we briefly consider $p$-ranks (that is, over $\F_p$, the field of order $p$).  Here are two easy facts.

\begin{prop}
The $2$-rank of $N_2$ for a TS$_3(v)$ is at most $\binom{v-1}{2}$.
\end{prop}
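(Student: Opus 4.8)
The plan is to squeeze the $\F_2$‑column space of $N_2$ inside a space that already has dimension $\binom{v-1}{2}$. Identify the point set of the TS$_3(v)$ with the vertex set of the complete graph $K_v$, so that the rows of $N_2$ are indexed by the edges of $K_v$ and, mod $2$, each column of $N_2$ is the characteristic vector of the three pairs contained in one block. The key observation is that those three pairs form a triangle, hence a cycle of $K_v$, so every column of $N_2$ lies in the cycle space $\mathcal{Z}\subseteq\F_2^{\binom{V}{2}}$ of $K_v$. Thus the column space of $N_2$ over $\F_2$ is a subspace of $\mathcal{Z}$, and so the $2$-rank of $N_2$ is at most
\[
\dim_{\F_2}\mathcal{Z} \;=\; \binom{v}{2}-v+1 \;=\; \binom{v-1}{2},
\]
where I use the standard formula $\dim_{\F_2}\mathcal{Z}=e-v+1$ for the cycle space of a connected graph on $v$ vertices with $e$ edges, together with the connectedness of $K_v$.

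Equivalently, and this is the dual route, I would produce $v-1$ independent vectors in the left kernel of $N_2$ over $\F_2$. For a vertex $x\in V$ let $\delta(x)\in\F_2^{\binom{V}{2}}$ be the indicator of the star at $x$ (the edges through $x$). Every block is a triangle, and a triangle meets a star in $0$ edges (if it avoids $x$) or in exactly $2$ edges (if it contains $x$); hence $\delta(x)^{\top}N_2=0$ over $\F_2$. After fixing any $x_0\in V$, the $v-1$ stars $\delta(x)$ with $x\in V\setminus\{x_0\}$ are linearly independent, since the pair $\{x,x_0\}$ belongs to $\delta(x)$ and to no $\delta(x')$ with $x'\in V\setminus\{x,x_0\}$. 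So the left kernel has dimension at least $v-1$, which yields the same bound on the rank. (The star vectors span the cut space of $K_v$, which is exactly the orthogonal complement of $\mathcal{Z}$, so the two arguments are really the same.)

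The main obstacle is essentially nil: the content is entirely the observation that each block is a cycle of $K_v$ (equivalently, that stars annihilate all triangles mod $2$), after which one only quotes the dimension of the cycle (or cut) space of $K_v$. It is perhaps worth remarking that the argument uses nothing about the design axioms, so the bound $\binom{v-1}{2}$ on the $2$-rank of $N_2$ holds for any $3$-uniform (multi)hypergraph on $v$ vertices; in particular $N_2$ of a TS$_3(v)$ is never of full rank over $\F_2$, unlike the characteristic‑zero behaviour in Theorem~\ref{main}.
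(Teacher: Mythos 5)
Your proof is correct and, in its second paragraph, is essentially identical to the paper's: the paper also exhibits the $v-1$ independent star vectors in the left kernel of $N_2$ over $\F_2$, using the fact that a triangle meets a star in $0$ or $2$ edges. Your first paragraph merely rephrases this dually via the cycle space of $K_v$, so there is no substantive difference in approach.
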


\begin{proof}
Consider the $v-1$ pairs incident with some point, say $x$.  Every block intersects either zero or two such pairs, and hence the corresponding vector in $\R^{\binom{V}{2}}$ lies in the left kernel of $N_2$ over $\F_2$.  There are $v-1$ such independent relations over $\F_2$, and therefore the kernel has dimension at least $v-1$.
\end{proof}

\begin{prop}
The $3$-rank of $N_2$ for a TS$_3(v)$ is at most $\binom{v}{2}-1$.
\end{prop}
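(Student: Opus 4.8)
The plan is to exhibit a single explicit nonzero vector in the left kernel of $N_2$ over $\F_3$. Since $N_2$ is square of order $\binom{v}{2}$, one nontrivial linear dependence among its rows (equivalently, a vector $w$ with $w^\top N_2 = 0$) already forces the rank to drop by at least one, which is exactly the asserted bound.

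The natural candidate is the all-ones vector $\vj$ indexed by the pairs $\binom{V}{2}$. For any block $e \in \cB$, the corresponding column of $N_2$ is the indicator of which pairs lie inside $e$; as $|e| = 3$, exactly $\binom{3}{2} = 3$ pairs are contained in $e$, so that column sums to $3$. Hence $\vj^\top N_2 = 3\,\vj^\top$, where the right-hand $\vj$ is now indexed by blocks, and reducing modulo $3$ gives $\vj^\top N_2 \equiv 0$ over $\F_3$. Since $\vj \neq 0$, this is the required dependence, so the $3$-rank of $N_2$ is at most $\binom{v}{2} - 1$.

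There is essentially no obstacle here; the argument is just the observation that every column of $N_2$ has constant sum $3$. It is worth remarking only that the identical computation over $\R$ (or any field of characteristic $\neq 3$) yields no dependence, consistent with $N_2$ being nonsingular in characteristic zero for the orders produced by Theorem~\ref{main}.
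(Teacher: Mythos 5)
Your proof is correct. It is essentially the paper's argument, but in a slightly more direct form: the paper observes that $N_2 N_2^\top$ has constant rowsum $9$, so that $\vj$ lies in the kernel of the Gram matrix over $\F_3$, and then singularity of $N_2$ follows (e.g.\ from $\det(N_2N_2^\top)=\det(N_2)^2$); you instead note that every column of $N_2$ sums to $\binom{3}{2}=3$ and exhibit $\vj$ directly as a left-kernel vector of $N_2$ itself. The two observations are two factorizations of the same fact (the $9$ is your column sum $3$ times the row sum $3$ coming from $\lam=3$), but your version is self-contained, avoids the detour through the Gram matrix, and makes the rank drop immediate; both yield exactly the bound $\binom{v}{2}-1$.
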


\begin{proof}
Observe that $N_2 N_2^\top$ has constant rowsum equal to 9.  So the all-ones vector is in the kernel of $N_2 N_2^\top$ over $\F_3$.
\end{proof}

In our searches for $v=7,9$, we found that both of the above bounds can be met with equality.  Also, it appears likely that, for our problem, $\Q$-rank always agrees with $p$-rank for primes $p>3$.  We presently see no easy argument to confirm this.

\section*{Acknowledgements}

The authors would like to thank Patric R.J.~\"{O}sterg\r{a}rd for providing a data file of all TS$_3(9)$ up to isomorphism.  We would also like to acknowledge Felix Goldberg's question on MathOverflow (and Yuichiro Fujiwara's thoughtful answer) at \url{http://mathoverflow.net/questions/151702/}
which confirmed our belief that ranks of higher incidence matrices could be of interest to the broader community.


\begin{thebibliography}{99}

\bibitem{Quint}
F.E.~Bennett, C.J.~Colbourn, and R.C.~Mullin, Quintessential pairwise
balanced designs. \emph{J. Stat. Plann. Infer.} 72 (1998), 15--66.

\bibitem{Handbook}
C.J.~Colbourn \and J.H.~Dinitz, eds., {\em The CRC Handbook of Combinatorial 
Designs}, 2nd edition, CRC Press, Inc., 2006.

\bibitem{percentages}
C.J.~Colbourn and V.~R\H{o}dl, Percentages in pairwise balanced designs. 
\emph{Discrete Math.} 77 (1989), 57--63.

\bibitem{CR}
C.J.~Colbourn and A.~Rosa, {\em Triple Systems}, Oxford Univ. Press, 1999.

\bibitem{DHV}
J.~Doyen, X.~Hubaut and M.~Vandensavel, Ranks of incidence matrices of Steiner triple systems.  \emph{Math. Z.} 163 (1978), 251--259.

\bibitem{Keevash}
P.~Keevash, The existence of designs, arXiv preprint \url{http://arxiv.org/pdf/1401.3665v1.pdf}, 2014.

\bibitem{Ontdes}
D.K.~Ray-Chaudhuri and R.M.~Wilson, On $t$-designs. Osaka J. Math. 12 (1975), 737--744. 

\bibitem{VN}
C.~Van Nuffelen, On the incidence matrix of a graph. \emph{IEEE Trans. Circuits and Systems} 9 (1976), 572. 

\bibitem{RMW2}
R.M.~Wilson, An existence theory for pairwise balanced designs III: Proof of the
existence conjectures.  {\em J. Combin. Theory Ser. A}
18 (1975), 71--79.


\end{thebibliography}
\end{document}